\documentclass[a4paper,11pt]{article}
 \usepackage[T1]{fontenc} 
 \usepackage{a4wide}
 \usepackage{times}
\usepackage{amsthm, amsmath, amssymb}
\usepackage{algorithm}
\usepackage[noend]{algpseudocode}
\usepackage[british]{babel}
\usepackage{cases}
\usepackage{chngcntr}
\usepackage{mathrsfs}
\usepackage{hyperref}
\usepackage{natbib}
\usepackage{fancyvrb}
\usepackage{bbm}
\usepackage{bm}
\usepackage{latexsym}
\usepackage[title]{appendix}
\usepackage{color, graphicx}
\usepackage[normalem]{ulem}
\usepackage[font={small}]{caption}
\usepackage{lipsum}
\usepackage{float}
\usepackage{verbatim}
\usepackage{url}
\usepackage{authblk}

\makeatletter
\renewcommand\@biblabel[1]{-}
\makeatother
\theoremstyle{remark}
\newtheorem*{remark}{Remark}
\newtheorem*{xmpl}{Example}
\theoremstyle{plain}

\newtheorem{lemma}{Lemma}
\newtheorem{assumption}{Assumption}
\definecolor{jpred}{rgb}{0.99,0,0}

\definecolor{abblue}{rgb}{0,0,0.8}

\def\argmax{\mathop{\rm argmax}}
\def\argmin{\mathop{\rm argmin}}

\title{Regress-Later Monte Carlo for Optimal Inventory Control with applications in energy}

\author{Alessandro Balata\thanks{email: bn14a5b@leeds.ac.uk} }
\author{Jan Palczewski\thanks{email: J.Palczewski@leeds.ac.uk}}
\affil{School of Mathematics, University of Leeds, LS2 9JT, Leeds, United Kingdom}

\begin{document}

\maketitle

\begin{abstract}
We develop a Monte-Carlo based numerical method for solving discrete-time stochastic optimal control problems with inventory. These are optimal control problems in which the control affects only a deterministically evolving inventory process on a compact state space while the random underlying process manifests itself through the objective functional. We propose a Regress Later modification of the traditional Regression Monte Carlo which allows to decouple inventory levels in two successive time steps and to include in the basis functions of the regression the dependence on the inventory levels. We develop a backward construction of trajectories for the inventory which enables us to use policy iteration of Longstaff-Schwartz type avoiding nested simulations.
Our algorithm improves on the grid discretisation procedure largely used in literature and practice, and on the recently proposed control randomisation by \citet{Kharroubi2013}.  We validate our approach on three numerical examples: a benchmark problem of energy arbitrage used to compare different methods available in literature; a multi-dimensional problem of control of two connected water reservoirs; and  a high-dimensional problem of the management of a battery with the purpose of assisting the operations of a wind turbine in providing electricity to a group of buildings in a cost effective way.
\end{abstract}

\textbf{keywords:} stochastic optimal control;   regression Monte Carlo;   regress later;    inventory control;   energy arbitrage.

\section{Introduction}
\label{intro}

Our study of inventory control problems is motivated by the management of a limited amount of resources (which can be replenished or not) in order to maximise a measure of performance based on possibly both the inventory level itself and an exogenous process which for example could be the price or the demand for that particular resource. The inventory level influences the admissible controls since when the inventory is full it is not possible to accumulate resources further and analogously when it is empty it is not possible to withdraw them. Examples include management of energy storage such as batteries \citep{Haessig2013}, flywheels or pump-hydro stations for electricity \citep{Zhaoa2009}, underground caves for gas storage \citep{Boogert2008}; or management of warehouse/shop stocks subject to fluctuations of product prices and demand \citep{Secomandi2010}. 

In general, an inventory control problem involves two, possibly multi-dimensional, processes:  an \emph{underlying/exogenous process} being a discrete-time Markov process (Markov chain) with an arbitrary state space and a controlled \emph{inventory process} evolving deterministically on a compact state space. The set of admissible controls is usually state-dependent particularly due to the constraints on the inventory levels. The exogenous process is not affected by controls but influences the set of admissible controls and the objective functional. In an energy arbitrage context where the storage is used to buy energy cheap and sell expensive, the underlying process is the energy price while the inventory process monitors the level of the storage. More complex problems are discussed in Section \ref{numerics}.

We intend to approach our problem in a probabilistic way, through its dynamical programming formulation and, in particular, we study algorithms which can be classified as regression Monte Carlo methods. Introduced in literature by \citet{Carriere1996},\citet{Tsitsiklis2001} and \citet{Longstaff2001} these methods have proved their effectiveness in pricing of American and Bermudan options for complex dynamics of underlying instruments \citep{Jain2012}. They borrow from standard Monte Carlo methods their independence on the dynamics as long as these can be simulated. This feature prompted interest in applying regression Monte Carlo to inventory control problems where the exogenous process is commonly multi-dimensional and evolving on a non-compact state space preventing successful application of Markov chain approximation methods \citep{KD2001}. Although similarly to pricing of American/Bermudan options the control in inventory problems does not affect the random dynamics,  the action of the control changes the future inventory level making the problem significantly more difficult. The simplest and most commonly used approach borrows from Markov chain approximations by discretising the inventory space and applying regression Monte Carlo independently at each point of the discrete grid, see, e.g., \citet{Boogert2008, Moriarty2015, Delage2014,  Secomandi2014}. However, due to separate regressions needed at each discretisation point of the inventory, the method is inefficient and computational complexity becomes prohibitive when the dimension of the inventory space grows. 

A natural improvement to the inventory space discretisation is to include the inventory variable in the regression in order to be able to read from the resulting regression surface all information ``contained in each discretisation level'' and, without interpolating, those in-between. Algorithms in \cite{Carmona2010, Denault2013} require that the set of available controls be finite and small (in their examples, they allow for up to three values of control: increase, decrease, hold). Particularly, the approach by \cite{Denault2013} is sensitive to the size of the control set as for each possible control value it runs a separate regression at each time step in the backward dynamic programming procedure. \citet{Kharroubi2013} treat the control as a state variable and approximate the continuation value in dynamic programming procedure as a function of three variables: the state of endogenous process, the inventory and the control. This allows them to optimise over an infinite (or even continuous) set of controls at the cost of adding additional dimensions to the regression and basis functions. \citet{Langrene2015} shows an application of this methodology to natural resource extraction. In fact, \citet{Kharroubi2013} solve a more general problem of controlling diffusions, so that the control affects the random dynamics itself.

This paper builds on a non-standard variation of regression Monte Carlo called Regress Later Monte Carlo (RLMC) which can be traced back to \citet{Broadie2000, Glasserman2002, Broadie2004} and was recently studied in \citet{Beutner, Jain2015, Secomandi2014}. In the classical regression Monte Carlo (Regress Now Monte Carlo, RNMC), the conditional expectation of future payoffs (continuation value) appearing in the dynamic programming equation is approximated by a combination of a finite number of basis functions of present state variables through minimisation of the least square error along simulated trajectories of the underlying process. RLMC approximates first the value function at the following time by a finite combination of basis functions chosen in such a way that their conditional expectations can be computed analytically. 	\citet{Beutner} demonstrate that this modification of RNMC has a potential for significant improvement in the convergence rate of computed approximations to the value function. \citet{Secomandi2014} compare RNMC and RLMC in the context of energy real options and find that the latter excels in valuation of the dual upper bound for the value function. 

In the existing literature RLMC is predominantly regarded as a tool to reduce the approximation error in traditional exogenous Regression Monte Carlo problems (e.g., American option pricing). To the best of our knowledge, its ground-breaking potential for problems with endogenous (controlled) state variables has not been recognised yet. Our \emph{first contribution} is a formal derivation of the Regress Later Monte Carlo method for inventory problems avoiding discretisation of the inventory space as well as control randomisation. 

Policy iteration of \cite{Longstaff2001} (here called \emph{performance iteration} to distinguish it from classical policy iteration methods in control of Markov chains \citep{KD2001}) improved upon \cite{Tsitsiklis2001} by reducing propagation of the approximation error of the continuation value in the recursive formula for value function of an optimal stopping problem. In their algorithm, at each step of the dynamic programming backward procedure the continuation value is computed as a conditional expectation of realised payoffs obtained by following the estimate of the optimal strategy. This approach relies on the fact that the control (stopping) manifests itself through the objective functional but does not affect the dynamics of the underlying process. In inventory problems the level of inventory is determined by the control. Performance iteration would therefore require resimulation of inventory levels after an estimate of the optimal control is found in each backward step. The \emph{second contribution} of this paper is the introduction of a technique for construction of optimal paths for the inventory backward in time, which considerably reduces the computations required by the performance iteration approach. 

In the final part of the paper, we perform a numerical study of existing algorithms for inventory control.  For this we use a benchmark problem of energy arbitrage on a finite time horizon in which the electricity store is employed to buy energy when the energy price is low and to sell when the price is high. The exact strategy is intricate as it has to take into account the state of charge, the distribution of future prices and the time remaining for trading. We demonstrate that the algorithm involving discretisation of the inventory variable requires long computational time to achieve acceptable quality of estimated control policy. The regress-later method introduced in this paper computes good policies up to 50 times faster. The efficiency of control randomisation of \citet{Kharroubi2013} is much better than grid-discretisation but significantly worse than of our algorithm.
We further examine the performance of grid-discretisation and regress-later  algorithms on a  pumped hydro-reservoir control problem with two dimensional inventory and control. It is in multi-dimensional problems that the grid discretisation methods particularly underperform due to the exponential increase of the number of discretisation points with the dimension. We show that our regress-later algorithm is able to tackle such problems efficiently and is less affected by the growth of dimensionality. 

We conclude the numerical section of the paper with a multi-dimensional problem motivated by \citet{Powell2014}. It consists in controlling a battery within a system with a wind turbine, stochastic demand and bi-directional connection to the grid trading at stochastic exogenous prices. The objective is to maximise the expected profits from operating the battery and involves using the battery to store electricity when the wind supply exceeds demand or when the grid price is low, whereas providing power to the demand or selling it to the grid when the price is high. The underlying stochastic process is three dimensional modelling wind power, demand and electricity price. The control is effectively two-dimensional with the inventory representing the state of charge of the battery. We demonstrate that regress-later Monte Carlo methods developed in this paper performs well in this multi-dimensional problem and exploit the statistical information about complicated dynamics of the underlying processes in the computation of the control policy. We further compute the value of the battery showing the potential for practical applications in project valuation (c.f. \cite{Guthrie2009, Swindle2014}).

The paper is organised as follows. In Section 2 we introduce the inventory problem including a discussion of a link to a continuous-time model. In Section 3 we critically analyse existing regression Monte Carlo-type approaches for inventory problems with two alternative iterative methods: value and performance iteration.
Section 4 contains our main contributions: a Regress Later Monte Carlo method for inventory problems and backward simulation of inventory process to avoid nested simulations within the context of performance iteration. In the last section we validate our approach on three numerical examples: a benchmark problem of energy arbitrage used to compare different methods available in literature, a multi-dimensional problem of control of two connected water reservoirs, and a high-dimensional problem of the management of a battery with the purpose of assisting the operations of a wind turbine in providing electricity to a group of buildings in a cost effective way.

\section{Inventory Control problem}

Consider a multidimensional process $(X,I)$ with an exogenous (uncontrolled) $p$-dimen\-sional random component $X_n(\omega)$, $n \ge 0$,  and an endogenous $q$-dimensional controlled component $I_n$ with values in $[0,I^1_{\text{max}}]\times\dots[0,I^q_{\text{max}}]$. We assume that $(X_n)$ is a Markov process and the discrete-time dynamics of $I_n$ is given by
\begin{equation}
\label{eq:invdynamics}
I_{n+1} = \varphi (n, u_n, I_n),
\end{equation}
where $\varphi$ is a Borel-measurable function and $u_n$ is a $q'$-dimensional control. Denoting by $\mathcal{F}^X$ the filtration generated by the process $(X_n)$, we specify the set of admissible controls $\mathcal{U}$ as all $\mathcal{F}^X$-adapted processes $(u_t)$ satisfying $u_n=(u_n^1,\,\dots,\,u_n^{q'}) \in [\underline{u}^1,\overline{u}^1]\times \cdots \times [\underline{u}^{q'},\overline{u}^{q'}]$ for all $n$ and such that $I_n \in [0,I^1_{\max}] \times \cdots \times [0,I^{q'}_{\max}]$. Without loss of generality and for simplicity of presentation, we will assume in the theoretical part of the paper that the inventory and the controls are one-dimensional , i.e., $I_n \in [0, I_{\max}]$ and $u_n \in [\underline{u}, \overline{u}]$. Numerical examples in Section \ref{numerics} are performed for multi-dimensional problems. 

In this setting, we define a performance measure
\begin{equation}
\label{eq:dperformance}
J(n,X,I,u)=\sum_{s=n}^{N-1} f(s,X_s,I_s,u_s)+g(X_N,I_N),
\end{equation}
The value function is given by $V(n,x,i) = \sup_{(u_s) \in \mathcal{U}} \mathbb{E} [ J(n, X, I, u) | X_n = x, I_n = i]$. As the dynamics of $(I_n)$ are deterministic and the control is adapted, we can exploit the measurability of $I_{n+1}$ with respect to $(X_n,I_n)$ in the Bellman equation for $V$:
\begin{equation}
\label{eq:dynamicinventorytrick}
\begin{cases}
\displaystyle V(n,x,i)=\sup_{\substack{u \in [\underline{u}, \overline{u}],\\ \varphi(n, u, i) \in [0, I_{\max}]}} \Bigl\{ f(n,x,i,u)+\mathbb{E}\bigl[V\big(n+1,X_{n+1},\varphi(n,u,i)\big) |X_n=x\bigr]\Bigr\},&\\
V(N,x,i)=g(x,i).&
\end{cases}
\end{equation}
This property will be useful for numerical methods presented in Section \ref{later}.

When designing a numerical algorithm for the solution of the dynamic programming  problem \eqref{eq:dynamicinventorytrick} the most natural approach is that of computing $V(n, x, i)$ backward in time, starting from the known terminal condition.  The problem, however, lies in the computation of the conditional expectation in \eqref{eq:dynamicinventorytrick} as the future state depends on the choice of control $u$. In the classical regression Monte Carlo, this would require redoing the regression step for each choice of the feedback function $u_n(X_n, I_n)$. In the following section, we discuss classical regression Monte Carlo in details and shed further light on the aforementioned problem with its application to an optimal control problem. We then sketch existing methods for overcoming this difficulty paving the way to our contribution.

\section{Critical analysis of Regression MC for inventory optimisation}
\label{CRMC}

As explained in the introduction, Regression Monte Carlo (RMC) is one of the most flexible approaches to solve inventory control problems. It was introduced in \citet{Carriere1996} and made famous by \citet{Longstaff2001} in the context of American option pricing. The method is very powerful for three main reasons:
\begin{itemize}
\item the method is applicable for any Markov process $X$ as long as it can be simulated;
\item the complexity, excluding the regression step, scales linearly with the number of dimensions;
\item the regression step allows to compute the conditional expectations without having to run a nested MC simulations for each state.
\end{itemize}
This is made possible because the RMC method approximates conditional expectations using a linear combination of basis functions of state variables. The choice of basis functions plays a crucial role in the accuracy and computational complexity of approximating conditional expectations. When multidimensional basis functions are constructed by multiplying basis functions for individual dimensions, the complexity of the regression step grows exponentially with the number of directions. This can, however, be overcome with a careful choice of multi-dimensional basis functions based on domain knowledge or preliminary numerical results. 

\subsection{Regression approximation of conditional expectation}
Recall that in the formal presentation of the method for inventory problems, we restrict attention to the one-dimensional case: a single inventory and a one-dimensional random state process $(X_n)$. Notice that the higher dimensional case is identical; in fact the dimension of $X$ and $I$ does not influence any of the arguments that follow while only in Section \ref{backpolicy} we require the dimension of $I$ to be one. Define $\mathcal{L}^K_{\phi}$ as the linear space generated by the set of (linearly independent) functions $\{\phi_k(\cdot,\cdot)\}_{k=1}^K:\mathbb{R}\times[0,I_{\text{max}}]\to\mathbb{R}$.
The dynamic programming iteration  \eqref{eq:dynamicinventorytrick} can be computed in the regression MC framework taking advantage of the cross sectional information comprised in the different paths and projecting the value function at time $n+1$ over $\mathcal{L}^K_{\phi}(n)$, where with this notation we mean the set of basis function evaluated at time $n$, i.e. $(\phi_k(X_n,I_n))_{k=1, \ldots, K}$. More generally, for any functional $h:(\mathbb{R} \times [0, I_{\max}])^{N-n} \to \mathbb{R}$ we can compute an approximation of the conditional expectation $\hat{\mathbb{E}}_{n,x,i} \big[h((X_t, I_t)_{t=n+1, \ldots, N})\big] \approx\mathbb{E}\big(h((X_t, I_t)_{t=n+1, \ldots, N})|X_n = x,I_n = i\big)$ as follows:
\begin{equation}
\label{eq:condappr}
\hat{\mathbb{E}}_{n,x,i} \Bigl[h\bigl((X_t, I_t)_{t=n+1, \ldots, N}\bigr)\Bigr] = \sum_{k=1}^K \alpha_k \phi_k(x, i),
\end{equation}
where coefficients $(\alpha_k)$ solve the optimisation problem
\begin{equation}
\label{eq:condappr1}
\mathbb{E}\biggl[\mathbb{E}\Bigl[h\bigl((X_t, I_t)_{t=n+1, \ldots, N}\bigr) \Big|X_n,I_n\Bigr] - \sum_{k=1}^K \alpha_k \phi_k(X_n, I_n) \biggr]^2 \to \min.
\end{equation}
In the above, the outer expectation averages over the distribution of $(X_n, I_n)$ and we intentionally suppress in the notation the dependence of $I_{n+1}$ on the control $u_n$ for the reasons that will become clear while we describe existing methods for dealing with the limitations of the regression MC for controlled processes. We only hint here that one of these methods involves introducing randomness to the dynamics of $(I_n)$. 

A practical implementation of the RMC method requires generation of a set of $M$ trajectories $(Z_n^m, Y_n^m)_{n,m=1}^{N,M}$ of the process $(X_n,I_n)_{n=1}^N$ to approximate expectations in \eqref{eq:condappr1}. Notice that, because the inventory is controlled, the generation of the $Y$'s is problematic, but assume for now that it is possible. Coefficients $(\alpha^n_k)_{k=1, \ldots, K}$ in the approximation \eqref{eq:condappr} of the conditional expectation 
$\mathbb{E}\big(h((X_t, I_t)_{t=n+1, \ldots, N})|X_n = x,I_n = i\big)$ are then computed as
\begin{equation}
\label{eq:coefficients_MC}
\alpha^{n}=\argmin_{\alpha\in\mathbb{R}^K}\Bigl\{\sum_{m=1}^M\Big[h((Z^m_t, Y^m_t)_{t=n+1, \ldots, N})-\sum_{k=1}^K\alpha_k\phi_k(Z^m_{n},Y^m_{n})\Big]^2\Bigr\}.
\end{equation}
This is a standard linear regression procedure where $M$ responses $\big\{h((Z^m_t, Y^m_t)_{t=n+1, \ldots, N})\big\}_{m=1}^M$ are regressed against the set of $K$ explanatory variables $\big\{\phi_k(Z^m_{n},Y^m_{n})\big\}_{m = 1}^{M}$ indexed by $k=1, \ldots, K$.

\subsection{Value and Performance iteration}
\label{value}

Value iteration for optimal stopping problems has been introduced in the literature on regression Monte Carlo by \citet{Tsitsiklis2001}. In the optimal control case, the update rule for the value function follows directly from equation \eqref{eq:dynamicinventorytrick}:
\begin{equation}
\label{eq:valueit}
V(n,x,i)=\sup_{\substack{u \in [\underline{u}, \overline{u}],\\ \varphi(n, u, i) \in [0, I_{\max}]}}\Bigl\{ f(n, x,i,u)+\hat{\mathbb{E}}_{n,x,i, u}\bigl[V(n+1,X_{n+1},I_{n+1})\bigr]\Bigr\},
\end{equation}
where the estimator $\hat{\mathbb{E}}$ introduced in \eqref{eq:condappr} has been extended to accommodate the control $u$ at time $n$; recall that this control acts in a deterministic way on $I_n$ only. The difficulties arising  in extending \citet{Tsitsiklis2001} approach to inventory problems are discussed in Section \ref{why}. In a nutshell, the future value of $I_{n+1}$ depends on the control applied at time $n$ therefore preventing  the computation of the conditional expectation estimator $\hat{\mathbb{E}}_{n,x,I_n}\bigl[V(\cdot ,I_{n+1})\bigr]$. 

Policy iteration for regression Monte Carlo was introduced in \citet{Longstaff2001} for optimal stopping problems; despite its name, this approach is different to the well known policy iteration algorithm for the control of Markov chains \citep{KD2001} in which a parametrised control is improved iteratively. To avoid misunderstanding, throughout the paper we will refer to the Longstaff-Schwartz policy iteration approach as \emph{performance iteration}. Indeed, in this approach the update rule for the value function is based on the dynamic programming equation for the performance measure, instead of that for the value function:
\begin{equation}
\label{eq:policyit}
V(n,x, i)= \mathbb{E}\Bigl[f(n, x,i,u^*_n)+J\big(n+1,(X_t, I_t, u^*_t)_{t=n+1, \ldots, N})\bigr) |X_n=x,I_{n+1}=\varphi(n, u, i)\Bigr],
\end{equation}
where the "realised profits" (performance measure) are given by a recurrence relation
\begin{equation}
\label{eq:lspolicy}
J(n,(X_t, I_t, u^*_t)_{t=n, \ldots, N})=f(X_n,I_n,u^*_n (X_n, I_n))+J(n+1,(X_t, I_t, u^*_t)_{t=n+1, \ldots, N})
\end{equation}
 with $J(N,(X_N,I_N, u^*_N))=g(X_N,I_N)$.
 For the simplicity of notation, in parameters of $J$ we include $u^*_N$ which is never used since there is no control exerted at the terminal time $N$; for the sake of clarity, we will put $u^*_N (x,i) = 0$. An optimal control $u^*_n$ is estimated via 
\begin{equation}
\label{eq:control}
u^*_n (x,i)=\argmax_{\substack{u \in [\underline{u}, \overline{u}],\\ \varphi(n, u, i) \in [0, I_{\max}]}}\Bigl\{ f(n, x,i,u)+\hat{\mathbb{E}}_{n,x,i, u}\bigl[J(n+1,(X_t, I_t, u^*_t)_{t=n+1, \ldots, N})\bigr]\Bigr\}.
\end{equation}

Note that a direct implementation of equation \eqref{eq:policyit} requires computation of 
\[
J(n+1,(X_t, I_t, u^*_t)_{t=n+1, \ldots, N})=\sum_{t=n+1}^{N-1} f(k,X_t,I_t,u^*_t)+g(X_N,I_N)
\] 
for each choice of $u^*_n$, which numerically means resimulating the path from time $n$ to the terminal time incurring heavy computations. \cite{Longstaff2001} in an optimal stopping context propose a more efficient update rule based on \eqref{eq:lspolicy} in the sense that, given a path of $(X,I)$, it is possible to directly use the value of $J(n+1,(X_t, I_t, u^*_t)_{t=n+1, \ldots, N})$ computed at the previous time step to update $J(n,(X_t, I_t, u^*_t)_{t=n, \ldots, N})$. In the control case, this however requires that the value of $J(n+1, \cdot)$ is known for the particular trajectory $(X_t, I_t, u^*_t)_{t=n+1, \ldots, N}$ which itself depends on the control $u^*_n$ applied at time $n$. This seemingly invalidates the improvement of \cite{Longstaff2001} that avoids resimulation. We show in Section \ref{later} how this can be overcome.

In many problems, given a good approximation of the conditional expectations, both value and performance iteration can achieve precise estimates. Sometimes however, either when the value function has a  ``difficult'' shape  or because the running profit per period is relatively small compared to the continuation value, it is better to use performance iteration in order to minimise errors and improve the quality of the estimated policy. This approach works very well because the estimates of the conditional expectations are used only to take decisions while the value of these decisions is computed exactly, forward in time. Despite being time consuming the performance iteration can sometimes be the only possible alternative, in particular when using polynomial basis function because, even though they could be good enough to take decisions, they may be insufficient for accurate representation of the value of those. It should be pointed out however that the empirical regression calculated in the performance iteration case can be unstable, suffering from the relatively high variance of the  training points. In the value iteration case, on the other hand, response points are typically close to the previously estimated value function, causing the propagation of the regression error over time and, therefore, leading to stable, but biased, regression coefficients.  

Finally note that both value and performance iteration are designed to compute an estimation of the optimal policy $(u_t^*)$ and, in fact the value function obtained as output of the backward iterations should not be used as an approximation of the performance of this policy. Rather, we run an additional forward Monte Carlo simulation to establish the value of the estimated policy. The only output of the backward procedure should therefore be the matrix of regression coefficients $A=\{\alpha_k^n\}_{n,k=1}^{N,K}$, to be used in  forward simulations, to take decisions. The value of the policy induced by $A$ is computed as
\begin{equation}
\label{eq:eval}
v(x,i)=\frac{1}{M'}\sum_{m=1}^{M'}\Bigl(\sum_{n=1}^N f(n,Z^m_n,Y^m_n,\nu^m_n)+g(Z^m_n,Y^m_N) \Bigr)\,,\,\,\,\
\end{equation}
where the inventory process evolves as $Y_{n+1}^m=\varphi(Z_n^m,Y_n^m,\nu_n^m)$ with $Z^m_1=x$, $Y^m_1=i$ and 
\begin{equation}
\label{eq:controlpo}
\nu^m_n=\argmax_{\substack{u \in [\underline{u}, \overline{u}],\\ \varphi(n, u, Y_n^m) \in [0, I_{\max}]}}\Bigl\{ f(n, Z_n^m,Y_n^m,u)+\sum_{k=1}^K\alpha^n_k\phi_k(Z^m_{n},Y^m_{n})\Bigr\}.
\end{equation}
Note that since the process $(X)$ is uncontrolled  the whole set of trajectories $(Z_n^m)^{N,M'}_{n,m=1}$ in equation \eqref{eq:eval} can be precomputed. In the following, when presenting the algorithms, we use the notation ``Evaluate the policy'' to refer to the routine specified by equations \eqref{eq:eval}-\eqref{eq:controlpo}.

\begin{remark}
It can be easily shown that, because the estimated policy must be not better than optimal, it is true that:
\[
\mathbb{E}[v(x,i)]\le V(1,x,i)
\]
\end{remark}

\subsection{Limitations of classical RMC for inventory problems}
\label{why}

In practical implementations of value or performance iteration, the approximation of conditional expectation operator $\hat E_{n, x, i} (\cdot)$ is computed using Monte Carlo approach as in \eqref{eq:coefficients_MC}. In the value iteration case, see \eqref{eq:valueit}, this amounts to regress a set of response points $\mathcal{Y} = \big(V(n+1,Z_{n+1}^m,Y_{n+1}^m)\big)_{m=1}^M$ over the sets of observations $\mathcal{B}=\big(\mathcal{X}_1,\,\dots,\,\mathcal{X}_K\big)$, where $\mathcal{X}_k = \big(\phi_k(Z_n^m,Y_n^m)\big)_{m=1}^M$ for  $k=1,\ldots, K$, and $\big(Z_{t}^m, Y_t^m\big)_{m,t=1}^{M,N}$ is the set of simulated trajectories of $(X_t, I_t)_{t=1}^N$. However, contrary to the uncontrolled case studied by \cite{Tsitsiklis2001} and \cite{Longstaff2001}, the process $I$ depends on the control, so one cannot compute first the approximation of $\hat{\mathbb{E}}_{n,x,i}\bigl[V(n+1,X_{n+1},I_{n+1})\bigr]$ and then optimise over controls. For each choice of control, one gets a different value of $I_{n+1}$. This yields two problems:
\begin{itemize}
\item[a)] The calculation of an optimal control at time $n$ is in itself a fixed point problem: one has to find such a control $u^*_n$  that after doing a regression approximation of the conditional expectation with $I_{n+1}$ determined by this control, the maximiser in \eqref{eq:valueit} equals to $u^*_n$;
\item[b)] The value of $V(n+1,X_{n+1},I_{n+1})$ has to be established for multiple outcomes of $I_{n+1}$ achievable from $I_n$ when applying an admissible control $u_n$. This prevents usage of previously computed values of $V(n+1, X_{n+1}, \cdot)$.
\end{itemize}

In the following subsections we briefly review methods available in literature to address the above issues. Our contribution is presented in Section \ref{later}. 

\subsection{Grid discretisation of inventory levels}
\label{discretisation}

Consider a discretisation $\Lambda_L=\{\lambda_1=0,\dots,\lambda_L=I_{\text{max}}\}$ of the state space of the controlled inventory, commonly an uniform grid $\Lambda_L = \{ (l-1)\frac{I_{\max}}{L-1}:\ l = 1, \ldots, L \}$. Simulate $M$ trajectories $(Z_{t}^m)_{m, t=1}^{M,N}$ of the uncontrolled process $(X_t)_{t=1}^N$. 
Note that the use of the grid is effectively equivalent to having  $\{Y_n^{m,l}=\lambda_l\}_{m,l=1}^{M,L}$. In order to compute the continuation value in \eqref{eq:valueit} regress the set of responses  $\{V(n+1,Z^m_{n+1},\lambda_l)\}_{m=1}^{M}$ over $\{\phi_k(Z_n^m)\}_{m,k=1}^{M, K}$ for each $l = 1, \ldots, L$ obtaining a collection of coefficients $\{\alpha_n^{k,l}\}_{k, l =1}^{K, L}$ and approximations $\sum_{k=1}^K \alpha_n^{k, l} \phi_k (x)$ of ${\mathbb{E}}[V(n+1,X_{n+1},\lambda_l)|X_n=x]$. Since these approximations are for a fixed set of inventory levels, an interpolation between those levels is needed in order to run the backward procedure in dynamic programming equation \eqref{eq:valueit}. Denoting the resulting approximation of the conditional expectation $\mathbb{E}[V(n+1,X_{n+1},i)|X_n=x]$ by $\mathcal{E} (n, x,i)$, we obtain an approximate backward dynamic programming equation
\[
V(n,Z_n^m,\lambda_l)=\sup_{\substack{u \in [\underline{u}, \overline{u}],\\ \varphi(n, u_n, \lambda_l) \in [0, I_{\max}]}} \Bigl\{f(n, Z_n^m,\lambda_l,u)+\mathcal{E}(n, Z_{n}^m,\varphi(n, u, \lambda_l))\bigr]\Bigr\}
\]
for $l = 1, \ldots, L$.  Algorithm \ref{alg:1} provides details of the implementation. Extension to performance iteration results in a minor amendment as can be seen in the algorithm.
\begin{algorithm}[tb]
 \caption{Regression Monte Carlo algorithm with Grid Discretisation (GD)}
 \label{alg:1}
 \begin{algorithmic}[1]
\State Generate training points $\{Z_{n}^{m}\}_{n=1,m=1}^{N,M}$
\State Initialise the value function $V(N, Z_{N}^{m},\lambda_l)=g(Z_{N}^{m},\lambda_l), \quad  j=1,\ldots,L, \ m = 1, \ldots, M$
 \For{$n=N-1$ to $1$}
\For{$l=1$ to $L$}
  \State $\alpha_n^l=\argmin\limits_{\alpha\in\mathbb{R}^K}\{\sum_{m=1}^M[V(n+1,Z_{n+1}^{m},\lambda_l)-\sum_{k=1}^K\alpha_k\phi_k(Z_{n}^{m})]^2\}$
\State $\hat{\mathcal{E}}(Z_{n}^{m},\lambda_l)=\sum_{k=1}^K \alpha_n^{k,l}\phi_k(Z_{n}^{m}),\quad \forall m$
\EndFor
\State Extend $\hat{\mathcal{E}}(Z_{n}^{ m}, i)$ to $i \in [0, I_{\max}]$ by interpolation 
\State $u_{n,m}^l=\argmax_{\substack{u \in [\underline{u}, \overline{u}],\\ \varphi(n, u_n, \lambda_l) \in [0, I_{\max}]}}\{f(n,Z_{n}^{m},\lambda_l,u)+\hat{\mathcal{E}} (Z_{n}^{m},\varphi(n, u, \lambda_l))\}, \quad \forall m,l$
  \If{value iteration}
\State $V(n,Z_{n}^{m},\lambda_l)=f(n,Z_{n}^{m},\lambda_l,u_{n,m}^l)+\hat{\mathcal{E}} (Z_{n}^{m},\varphi(n, u^l_{n,m}, \lambda_l))\}, \quad \forall m,l$
  \EndIf
  \If{performance iteration}
\State For each $m$ and $l$, compute inventory trajectories $\{Y_{j}^{m,l} \}_{j=n}^N$ along $(Z_j^m)_{j=n}^N$ starting from $Y_n^{m,l} = \lambda_l$ using control
\State $\displaystyle u^*_{n} (z, y)=\argmax_{\substack{u \in [\underline{u}, \overline{u}],\\ \varphi(n, u_n, \lambda_l) \in [0, I_{\max}]}}\{f(n,z,y,u)+\hat{\mathcal{E}} (z,\varphi(n, u, y))\}.$
\State Set
\State  $V(n,Z_{n}^{m},\lambda_l)=\sum_{j=n}^{N-1} f\left(j,Z_{j}^{m},Y_{j}^{m, l},u_n^*\left(Z_{j}^{m}, Y_{j}^{m,l}\right)\right)+g(Z_{N}^{m},Y_{N}^{m,l})$
  \EndIf
  \EndFor
\State \textbf{Evaluate the policy}
\end{algorithmic}
\end{algorithm}

The inventory discretisation method is wide-spread because of the relative simplicity of extending a traditional Regression Monte Carlo algorithm to controlled dynamics, see, e.g., \citet{Boogert2008, Delage2014, Moriarty2015, Secomandi2014}. However, due to separate regressions needed at each discretisation point of the endogenous (controlled) component,  it could be prohibitive to implement it if there are more than two endogenous dimensions, see Section \ref{subsec:2D} for performance on a 2D optimisation problem.

A natural improvement to the discretisation technique would be that of including the inventory in the regression in order to be able to read from the resulting regression surface all information ``contained in each discretisation level'' and, without interpolating, those in-between. However, as discussed  in Section \ref{why}, this is not easily achievable and only recently some alternatives have appeared in the literature, see \citet{Carmona2010, Denault2013, Kharroubi2013, Langrene2015}.

In the following subsections we discuss methodological developments of the papers mentioned in the previous paragraph with the main focus on how the authors solve difficulties highlighted in Section \ref{why}.

\subsection{Quasi-simulation of the inventory}

In order to include the inventory in the regression, its trajectories $(I_n)$ have to be simulated before the control is computed. However, as the control directly guides the evolution of $(I_n)$, the inventory should rather be stated at time $N$ and then its paths constructed backward in time as the dynamic programming procedure progresses. \citet{Carmona2010} and \citet{Denault2013} resolved this issue in similar ways in the framework of an optimal switching problem related to the management of an energy storage device for which only 3 controls are available: charge, discharge and store; i.e. $u\in\{-1,0,1\}$. The small and finite size of the set of possible control is paramount for these methods. However, in our opinion, certain aspects of \citet{Carmona2010}'s approach can be extended to general controls.

In \citet{Carmona2010} the authors deal for the first time with the task of regressing $\{V(n+1,Z_{n+1}^m,Y_{n+1}^m)\}_{m=1}^M$ over both exogenous and inventory processes. Clearly only three predecessors $Y^m_n$ are possible and the authors suggest to test separately whether $Y^m_{n+1}$ is an optimal successor of one of these three. This is achieved by regressing the value function at time $n+1$ over observations $(Z^m_n,Y^m_{n+1})_{m=1}^M$ and later testing whether an optimal control at time $n$ for any of the three predecessors of $Y^m_{n+1}$ drives the inventory process to $Y_{n+1}^m$. If there is such a predecessor in the interval $[0, I_{\max}]$, the trajectory $Y^m$ is extended to time $n$ and the performance iteration is used via a direct update as explained in Section \ref{value}. Otherwise, the $Y^m_n$ is randomly placed in $[0,I_{\max}]$ and a value iteration approximation is used.

The approach in \citet{Denault2013} to construct backward paths of the inventory process uses  basis functions dependent on $X_n$ and $I_n$ and requires the computation of 3 separate regressions for each choice of control at step $n$. This procedure does not guarantee that the trajectories $(Y_n^m)_{n,m}^{N,M}$ will remain inside $[0,I_{\max}]$ and they decided therefore to introduce an artificial penalty term and a random replacement of the point $Y_n^m$ once it is too far from the interval $[0,I_{\max}]$.

Reliable and thorough tests are not available, however, our experience and the literature seem to suggest that the grid discretisation method is preferred over quasi-simulation whenever the dimensionality of the problem is small enough to allow its use.
 
\subsection{Control Randomisation}

Control randomisation was  introduced in \citet{Kharroubi2013}; it differs from the previous methods in that the control becomes a state variable and it is simulated (according to some arbitrary model that ensures admissibility) along trajectories of $(X_t, I_t)_{t=1}^N$. Data underlying the regression Monte Carlo consists of three sets of trajectories $\{ Z_n^{m},Y_n^{m},\tilde{u}_n^m\}_{n,m=1}^{N,M}$. In the case of value iteration \eqref{eq:valueit}, $\{V(n+1,Z_{n+1}^{m},Y_{n+1}^m) \}_{m=1}^M$ is regressed against basis functions evaluated at the points $\{Z_n^{m},Y_n^{m},\tilde{u}_n^m \}_{m=1}^M$. These regression basis functions are dependent now on the random control $\tilde{u}_n$, in addition to $X_n$ and $I_n$ so that the estimated continuation value will depend on the choice of the control (which is different on each sample trajectory). An optimal control at time $n$ given $X_n = Z_n^m$ and $I_n = Y_n^m$ is approximated by the expression (if the maximum is attained)
\begin{equation}\label{eqn:CR_opt}
u_n^*(x,i) = \argmax_{\substack{u \in [\underline{u}, \overline{u}],\\ \varphi(n, u, i) \in [0, I_{\max}]}}\Bigl\{ f(n, x,i,u)+\hat{\mathbb{E}}_{n,x,i, u}\bigl[V(n+1,X_{n+1},  I_{n+1}) )\bigr]\Bigr\},
\end{equation}
where, with a slight abuse of notation, we included  in the approximate conditional expectation $\hat{\mathbb{E}}$ the dependence on the control $u$. In general, multiple runs of the method could be needed to obtain precise estimates because the initial choice of the dummy control could drive the training points far from where the optimal control would have directed them.

\begin{remark}
In many practical applications, the mapping $u \mapsto \varphi(x, i, u)$ is injective (strictly increasing), hence there is a one-to-one correspondence between $u_n$ and $I_{n+1}$ given $I_n$. The addition of this new coordinate in the regression may be replaced by regressing $\{V(n+1,Z_{n+1}^{m},Y_{n+1}^m) \}_{m=1}^M$ against basis functions evaluated at points $\{Z_n^{m},Y_n^{m}, Y_{n+1}^m \}_{m=1}^M$. This shows that trajectories $\{Y_n^m\}_{n,m=1}^{N,M}$ may be constructed deterministically to offer maximum coverage of those parts of the state space in which the precision is the most desirable.
\end{remark}

For further reference see \citet{Langrene2015} where the authors apply the numerical method in the context of natural resource extraction. For details about the implementation and extension to the performance iteration case refer to Algorithm \ref{alg:2}. Notice that a resimulation stage, lines 9 and 10, is required in the performance iteration algorithm.
\begin{algorithm}[tb]
 \caption{Regression Monte Carlo algorithm: Control Randomisation (CR)\label{alg:2}}
 \begin{algorithmic}[1]
\State Generate a random control $\{\tilde{u}_{n}^{m}\}_{n=1,m=1}^{N,M}\in\mathcal{U}$ along the training points $\{Z_{n}^{m},Y_{n}^{m}\}_{n=1,m=1}^{N,M}$.
\State Initialise the value function $V (N, Z_{N}^{m},Y_{N}^{m}) =g(Z_{N}^{m},Y_{N}^{m}), \quad \forall m$
 \For{n=N-1 to 1}
\State $\alpha_{n+1}=\argmin\limits_{\alpha\in\mathbb{R}^K}\{\sum_{m=1}^M [V(n+1,Z_{n+1}^{m},Y_{n+1}^{m})-\sum_{k=1}^K\alpha^k_{n+1}\phi_k(Z_{n}^{m},Y_{n}^{m},\tilde{u}_{n,m})]^2\}$ 
 \If{value iteration}
  \State For all $m$ do
\begin{flalign}\label{eqn:CR_alg_vi}
&V(n,Z_{n}^{m},Y_{n}^{m})\\ 
&=\sup_{\substack{u \in [\underline{u}, \overline{u}],\\ \varphi(n, u, Y_{n}^{ m}) \in [0, I_{\max}]}} \Big\{f(n,Z_{n}^{m},Y_{n}^{m},u)+\sum_{k=1}^K\alpha_{n+1}^k \phi_k(Z_{n}^{m}, Y_{n}^{m},u)\Big\} \notag
\end{flalign}
 \EndIf
 \If{performance iteration}
\State For each $m$, update trajectories $\{Y_{j}^{m} \}_{j=n+1}^N$ using control given by Equation \eqref{eqn:CR_opt}
\State  $V(n,Z_{n}^{m},Y_{n}^{m})=\sum_{j=n}^{N-1} f\left(j,Z_{j}^{m},Y_{j}^{m},u_n^*\left(Z_{j}^{m}, Y_{j}^{m}\right)\right)+g(Z_{N}^{m},Y_{N}^{m})$
\EndIf
\EndFor
\State \textbf{Evaluate the policy}
\end{algorithmic}
 \end{algorithm}

\section{Regress Later Monte Carlo: a decoupling approach}
\label{later}

In the previous section the regression was used to approximate the conditional expectation with respect to $(X_n, I_n)$ directly as a linear combination of basis functions of those two variables. As opposed to this \emph{regress-now} approach, we employ in this section a \emph{regress-later} idea in which the conditional expectation with respect to $(X_n, I_n)$ is computed in two stages. First, a conditional expectation with respect to $(X_{n+1}, I_{n+1})$ is approximated in a regression step by a linear combination of basis functions of $(X_{n+1}, I_{n+1})$. Then, analytical formulas (or precomputed numerical approximations) are applied to condition this linear combination of functions of future values on present values $(X_n, I_n)$.

\subsection{Regress later approximation}

Unlike the regress-now method for approximating conditional expectations, the regress-later approach imposes conditions on basis functions:
\begin{assumption}
\label{laterass}
For each basis function $\phi_k$, $k=1, \ldots, K$, the conditional expectation
\[
\hat{\phi}_k^n(x,i) = \mathbb{E}[\phi_k(X_{n+1}, i)|X_n=x]
\] 
can be computed analytically.
\end{assumption}

\begin{remark} $\ $
\begin{enumerate}
\item Assumption \ref{laterass} can be relaxed in practical numerical implementations because the conditional expectations of the basis functions can be numerically precomputed.
\item In many problems, the process $(X_n)$ is time homogeneous and, therefore, functions $\hat\phi^n_k$ do not depend on $n$. For simplicity of notation we drop the superscript in the following.
\item Multidimensional basis functions are often constructed as products of one-dimensional functions: $\phi_k(x, i) = \phi^X_k(x) \phi^I_k(i)$. Then the function $\hat\phi_k$ factors as follows:
\[
\hat{\phi}_k(x,i) = \phi^I_k(i)\, \mathbb{E}[\phi^X_k(X_{n+1})|X_n=x].
\]
\end{enumerate}
\end{remark}

We will now present regress-later solution to \emph{value iteration} procedure \eqref{eq:valueit}. Assume that at time $n+1$ the value function $V(n+1,\cdot)$ has been computed for a set of training points $\{Z_{n+1}^m,Y_{n+1}^m\}_{m=1}^{M}$. We perform a regression to approximate $V(n+1, \cdot)$ with a linear combination of basis functions:
\[
V(n+1,x,i) \approx \sum_{k=1}^K\alpha^{n+1}_k\phi_k(x,i),
\]
where
\begin{equation}
\label{eq:coeff}
\alpha^{n+1}=\argmin_{\alpha\in\mathbb{R}^K}\bigg\{\sum_{m=1}^M\Bigl[V(n+1,Z^m_{n+1},Y^m_{n+1})-\sum_{k=1}^K\alpha_k\phi_k(Z^m_{n+1},Y^m_{n+1})\Bigr]^2\bigg\}.
\end{equation} 

Moving one step backward to time $n$, we would like to compute the value function $V(n, \cdot, \cdot)$ and an optimal control $u_n(\cdot, \cdot)$. We select a set of training points $\{Z_n^m,Y_{n}^m\}_{m=1}^{M}$, which can be placed independently from  $\{Z_{n+1}^m,Y_{n+1}^m\}_{m=1}^{M}$, therefore, removing the limitation of the regress-now approach for which points at different times must be linked through their true dynamics. Here, these training points are only needed for further computation of an approximation of the value function at time $n$ with a linear combination of basis functions. 
An approximation of an optimal control is computed as (assuming that the maximum is attained)
\[
u^*_n (x, i) =\argmax_{\substack{u \in [\underline{u}, \overline{u}],\\ \varphi(n, u, i) \in [0, I_{\max}]}}\Bigl\{ f(n, x , i,u)+\sum_{k=1}^K\alpha^{n+1}_k \hat \phi^n_k \big(x, \varphi(n,u,i)\big)\Bigr\}
\]
with the value function at sampling points estimated by
\[
V(n, Z_n^m, Y_n^m) = f(n, Z_n^m,Y_n^m,u_n^m)+\sum_{k=1}^K\alpha^{n+1}_k \hat \phi^n_k \big(Z_n^m, \varphi(n,u_n^m,Y_n^m)\big)
\]
with $u_{n}^{m} = u^*_n (Z_n^m, Y_n^m)$.

The above procedure can be easily adapted to \emph{performance iteration} approach, \eqref{eq:policyit}, with resimulation. Assume that a  control mapping $u^*_j: \mathbb{R} \times [0, I_{\max}] \to \mathbb{R}$ is known for $j = n+1, \ldots, N-1$. Assume further that $(Y_{j}^m)_{j=n+1, \ldots, N}$ correspond to those controls, i.e., $Y_{j+1}^m = \varphi\big(j, u^*_j (Z_j^m, Y_j^m), Y_j^m\big)$ for $j=n+1, \ldots, N-1$.
We approximate the conditional expectation $\mathbb{E} \big[J(n+1,(X_{j}, I_{j}, u^*_j)_{j=n+1, \ldots, N}) \big | X_{n+1} = x, I_{n+1} = i\big]$ by a linear combination of basis functions $\sum_{k=1}^K\alpha^{n+1}_k\phi_k(x, i)$ performing a least-squares regression:
\begin{equation}
\label{eq:coeff_pi}
\alpha^{n+1}=\argmin_{\alpha\in\mathbb{R}^K}\bigg\{\sum_{m=1}^M\Bigl[J\big(n+1,(Z_{j}^m, Y_{j}^m, u^*_j)_{j=n+1, \ldots, N}\big)-\sum_{k=1}^K\alpha_k\phi_k(Z^m_{n+1},Y^m_{n+1})\Bigr]^2\bigg\}.
\end{equation} 
Note that although $J(n+1, \cdot)$ is not itself measurable with respect to $(X_{n+1}, I_{n+1})$, as is the case for the value iteration algorithm, the least-squares regression in \eqref{eq:coeff_pi} approximates the aforementioned conditional expectation. Indeed, the above regression projects $J(n+1, \cdot)$ on the linear space spanned by functions $\phi_k(X_{n+1}, I_{n+1})$, $k=1, \ldots, K$. This can be viewed as a superposition of two projections: the first one on the space of generated by functions of $(X_{n+1}, I_{n+1})$ and further on its linear subspace generated by basis functions $\phi_1, \ldots, \phi_K$.

A backward step of the performance iteration algorithm computes a control mapping $u^*_n$:
\begin{equation}\label{eqn:back_cont}
u^*_n (x,i)=\argmax_{\substack{u \in [\underline{u}, \overline{u}],\\ \varphi(n, u, i) \in [0, I_{\max}]}}  \Bigl\{ f(n, x, i, u) +\sum_{k=1}^K\alpha^{n+1}_k \hat \phi^{n}_k \big(x, \varphi(n,u, i) \big)\Bigr\}.
\end{equation}
For all points $(Z_n^m, Y_n^m)$ the trajectories $\{ Y_j^m\}_{j=n+1}^N$ must be recomputed as it is unlikely that the inventory process governed by $u^*_n(Z_n^m, Y_n^m)$ matches exactly $Y_{n+1}^m$. In Section \ref{backpolicy} we show how to position training points for the inventory at time $n$ to avoid this resimulation step.

\subsection{Algorithm}

Algorithm \ref{alg:3} lists a pseudocode for value and performance iteration using regress-later approach.
Notice that the absence of interdependence among the training points $\{Z_n^m,Y_n^m\}_{n=1}^N$ at different times allows, at each time step, to place the training data where it is more convenient or needed. However, the optimal design of the training set is beyond the scope of the present paper and we generate observations $Y_n^m$ of $I_n$ from a uniform distribution on $[0,I_{\text{max}}]$, while $X_n^m$ are obtained from the simulation of the uncontrolled process $X$.

We would like to mention  a method for the flexible design of basis functions for which the conditional expectation $\hat\phi^n_k$ can be computed analytically. The method has been presented first in \citet{Bouchard2012} in the context of option pricing. It consists of dividing the state space into hypercubes and choosing affine functions within them. This approach allows to capture complex shapes as long as the number of hypercubes is sufficiently large. In our framework, this corresponds to choosing basis functions $\phi_k(x,i)= (a_k x + b_k i + c_k) 1_{\{(x,i)\in B_k\}},$ with disjoint hypercubes $B_k$.

\begin{algorithm}[tb]
 \caption{Regress-later Monte Carlo algorithm (RLMC)\label{alg:3}}
 \begin{algorithmic}[1]
\State Generate training points $\{Z_{n}^{m}\}_{n=1,m=1}^{N,M}$ accordingly to the dynamics induced by $X$
\State Choose training points $\{Y_{n}^{m}\}_{n=1,m=1}^{N,M}$
\State Initialise the value function $V (N, Z_{N}^{m},Y_{N}^{m}) =g(Z_{N}^{m},Y_{N}^{m}), \quad \forall m$
 \For{n=N-1 to 1}
\State  $\alpha_{n+1}=\argmin\limits_{\alpha\in\mathbb{R}^K} \big\{\sum_{m=1}^M [V(n+1,Z_{n+1}^{m},Y_{n+1}^{m})-\sum_{k=1}^K\alpha_k\phi_k(Z_{n+1}^{m},Y_{n+1}^{m})]^2 \big\}$ 
 \If{value iteration}
 \State For all $m$ do
\begin{flalign}\label{eqn:RLMC_alg_vi}
&V(n,Z_{n}^{m},Y_{n}^{m})\\ 
&=\sup_{\substack{u \in [\underline{u}, \overline{u}],\\ \varphi(j, u, Y_{j}^{ m}) \in [0, I_{\max}]}} \Big\{f(n,Z_{n}^{m},Y_{n}^{m},u)+\sum_{k=1}^K\alpha_{n+1}^k\hat \phi^n_k(Z_{n}^{m}, \varphi (n, u, Y_{n}^{m}))\Big\} \notag
\end{flalign}
\EndIf
 \If{performance iteration}
\State Define control mapping $u^*_n$ by formula \eqref{eqn:back_cont}
\State Apply \textbf{Algorithm 4}, or do the following:
\State\qquad Recalculate $\{Y_j^m \}_{j=n+1}^N$ for all $m$ using control mappings  $\{u^*_j\}_{j = n, \ldots, N-1}$
\State \qquad $V(n,Z_n^{m},Y_n^m)=\sum_{j=n}^{N-1} f\big(n,Z_j^m,Y_j^m, u^*_j (Z_j^m,Y_j^m)\big)+g(Z_N^m,Y_N^m) \quad \forall m$
\EndIf
\EndFor
\State \textbf{Evaluate the policy}
\end{algorithmic}
 \end{algorithm}

\subsection{Backward construction of inventory levels for performance iteration}
\label{backpolicy}

The main bottleneck in using performance iteration for inventory problems is the need to resimulate the inventory level until maturity after each backward step. When the algorithm moves closer to the initial time, the length of trajectories that need to be resimulated grows. Eventually, the computational effort of resimulation is proportional to $N^2$ compared to a linear cost of simulating the process $(X_n)$. Moreover for each step of the resimulated process an optimisation problem has to be solved in order to determine the policy associated to the new trajectory, heavily increasing the computational burden. In this section, we show how this can be avoided.

It follows from \eqref{eq:lspolicy} in Section \ref{value} that in order to avoid resimulation to compute $J(n,(Z_j^m, Y_j^m, u^*_j)_{j=n, \ldots, N})$, one needs to be able to reuse $J(n+1,(Z_j^m, Y_j^m, u^*_j)_{j=n+1, \ldots, N})$. Notice that an interpolation, analogously as in the grid discretisation approach, it is not a viable alternative because $J(n,(Z_j^m, Y_j^m, u^*_j)_{j=n, \ldots, N})$ can not be represented in terms of $(X_n,I_n)$ only as it depends on the whole future trajectory $(Z_j^m, Y_j^m)_{j=n, \ldots, N}$.

Here we propose a solution in which we aim to place points $Y_n^m$ in such a way that optimally controlled process reaches the state $(X_{n+1}^m, I_{n+1}=Y_{n+1}^m)$ at time $n+1$ and therefore the existing path $(Z_j^m, Y_j^m)_{j=n+1, \ldots, N}$ is a valid successor of $(Z_n^m, Y_n^m)$ enabling reuse of $J(n+1,(Z_j^m, Y_j^m, u^*_j)_{j=n+1, \ldots, N})$. 
Recall that the inventory process follows the dynamics
\[
I_{n+1}=\varphi(n,u^*_n(X_n, I_n),I_n).
\]
From the optimality condition \eqref{eqn:back_cont}, we obtain that the pathwise optimal antecedent of a point $Y_{n+1}^m$ is a solution $y \in [0, I_{\max}]$, if exists, of the equation
\begin{equation}
\label{eq:fixedpoint}
Y^m_{n+1} = \varphi\bigl(n, u^*_n (Z_n^m, y), y\bigr).
\end{equation}
In the following lemma, we explore the existence of such y. Define two quantities $\underline{\varphi_n}=\max\limits_{u\in[\underline{u},\overline{u}]}\{\varphi(n,u,0)\}$ and  $\overline{\varphi_n}=\min\limits_{u\in[\underline{u},\overline{u}]}\{\varphi(n,u,I_{\max})\}$ and assume $0\le \underline{\varphi_n}<\overline{\varphi_n}\le I_{\max},\,\forall\,n$.
\begin{lemma}\label{lem:1}
A solution $y \in [0,I_{\max}]$ to \eqref{eq:fixedpoint} exists if the following conditions are satisfied:
\begin{itemize}
\item[1.] $Y_{n+1}^m\in[\underline{\varphi_n},\overline{\varphi_n}]$,
\item[2.] the function $u^*_n(Z_n^m,y)$ in \eqref{eqn:back_cont} is continuous with respect to $y$.
\end{itemize}
Moreover, if the map $y \mapsto \varphi(n,u^*_n(Z_n^m,y),y)$ is strictly monotone then the solution is unique. 
\end{lemma}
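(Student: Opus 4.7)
The plan is to reduce the existence question to a one-dimensional intermediate value theorem applied to the map
\[
F(y) \;:=\; \varphi\bigl(n, u^*_n(Z_n^m, y), y\bigr), \qquad y \in [0, I_{\max}],
\]
so that a root of \eqref{eq:fixedpoint} is simply a preimage of $Y^m_{n+1}$ under $F$. The argument therefore splits into three checks: (a) $F$ maps into the correct range, in the sense that $F(0) \le Y^m_{n+1} \le F(I_{\max})$; (b) $F$ is continuous on $[0,I_{\max}]$; and (c) strict monotonicity of $F$ upgrades existence to uniqueness.

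For (a), since $u^*_n(Z_n^m, 0)$ is admissible at state $y=0$, the definition of $\underline{\varphi_n}$ as a maximum over admissible controls yields $F(0) = \varphi(n, u^*_n(Z_n^m,0), 0) \le \underline{\varphi_n}$; symmetrically, using $\overline{\varphi_n}$ as a minimum, $F(I_{\max}) \ge \overline{\varphi_n}$. Combining with hypothesis~1 of the lemma gives the chain $F(0) \le \underline{\varphi_n} \le Y^m_{n+1} \le \overline{\varphi_n} \le F(I_{\max})$. For (b), continuity of $F$ is the composition of the assumed continuity of $y \mapsto u^*_n(Z_n^m,y)$ (hypothesis~2) with continuity of $\varphi(n,\cdot,\cdot)$ in its last two arguments. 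An application of the intermediate value theorem on the interval $[0, I_{\max}]$ then delivers a $y \in [0, I_{\max}]$ with $F(y) = Y^m_{n+1}$, which is the required solution of \eqref{eq:fixedpoint}. For (c), if $F$ is strictly monotone it is injective, so the solution is unique.

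The main subtlety I anticipate is step (b): the standing assumption on $\varphi$ in the paper is only Borel measurability, whereas continuity of $F$ requires joint continuity of $\varphi(n,\cdot,\cdot)$ in $(u,i)$. I would either add this as an implicit regularity assumption (which is satisfied in essentially every practical inventory model, where $\varphi$ is linear or piecewise linear) or weaken the conclusion to the existence of some measurable selection. A related delicate point is hypothesis~2 itself: continuity of the argmax $u^*_n(Z_n^m, \cdot)$ is not automatic, and in a fully rigorous treatment one would justify it via a Berge-type maximum theorem applied to the continuous objective in \eqref{eqn:back_cont}, using, e.g., strict concavity in $u$ so that the maximiser is single-valued; but since the lemma takes this continuity as a direct hypothesis, no further work is needed here.
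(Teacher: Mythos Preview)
Your proposal is correct and follows essentially the same route as the paper: the paper defines $\psi(y) = Y_{n+1}^m - \varphi(n, u^*_n(Z_n^m, y), y)$, checks $\psi(0) \ge 0$ and $\psi(I_{\max}) \le 0$ using the definitions of $\underline{\varphi_n}$ and $\overline{\varphi_n}$ together with hypothesis~1, invokes continuity via hypothesis~2, and applies the intermediate value theorem, with uniqueness from strict monotonicity. Your observation that continuity of $\varphi$ in $(u,i)$ is tacitly assumed (the paper only states Borel measurability) is apt; the paper simply asserts that ``the continuity of $\psi$ is enforced by $\varphi$'' without flagging this as an additional regularity requirement.
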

\begin{proof}
Introduce the function $\psi(y)=Y_{n+1}^m-\varphi(n, u^*_n (Z_n^m, y), y)$ and notice that if condition 2. is satisfied the continuity of $\psi$ is enforced by $\varphi$. Evaluate $\psi$  it at the boundaries of $[0,I_{\max}]$:
\[
\begin{split}
\psi(0)=& Y_{n+1}^m-\varphi(n, u^*_n (Z_n^m, 0), 0) \\
\ge &\underline{\varphi_n}-\underline{\varphi_n}=0; \\
\psi(I_{\max})=& Y_{n+1}^m-\varphi(n, u^*_n (Z_n^m, I_{\max}), I_{\max})   \\
\le & \overline{\varphi_n}-\overline{\varphi_n}=0.
\end{split}
\]
Therefore, if condition 1. is satisfied it must exists a point $y \in [0, I_{\max}]$ such that $\psi(y)=0$, i.e., solve equation \eqref{eq:fixedpoint}.
In addition, if $y \mapsto \varphi(n,u^*_n(Z_n^m,y),y)$ is strictly monotone then so is $\psi$ and hence there exists only one $y$ which solves equation \eqref{eq:fixedpoint}.
\end{proof}

\begin{algorithm}[tb]
\caption{Fixed point performance iteration algorithm\label{alg:back}}
\begin{algorithmic}[1]
\State Let $y$ be a solution to $Y_{n+1}^m = \varphi\bigl(n, u_n (Z_{n}^{m}, y), y\bigr).$
\If{solution exists and $y\in[0,I_{\text{max}}]$}
\State Set $Y^m_{n}=y$  
\State $V(n,Z_{n}^{m},Y_{n}^{m})=f\big(Z_{n}^{m},Y_{n}^{m}, u^*_n (Z_{n}^{m}, Y_{n}^{m}) \big)+V\big(n+1,Z_{n+1}^{m}, \varphi(n,u^*_n (Z_{n}^{m}, Y_{n}^{m}),Y_{n}^{m})\big)$
\Else
\State Generate $Y_{n}^{m}$ randomly in $[0, I_{\max}]$
\State Resimulate to compute $V(n,Z_{n}^{m},Y_{n}^{m})$
\EndIf
 \end{algorithmic}
 \end{algorithm}

Lemma \ref{lem:1} ensures the existence of solution to the equation \eqref{eq:fixedpoint} even though such solution could be found also for weaker assumptions, possibly outside the interval $[0, I_{\max}]$. Whenever this solution does not lie in $[0, I_{\max}]$, i.e. $Y_{n+1}^m\notin[\underline{\varphi_n},\overline{\varphi_n}]$ ,  it is not possible to determine a step backward $Y^m_n$ for the inventory variable. In such case, we propose to break the path and position the point $Y_n^m$ at random in $[0,I_{\text{max}}]$. Following this step, we resimulate the path $\{Y^m_j\}_{j=n+1, \ldots N}$ as in the traditional performance iteration algorithm. The random positioning of training points $Y^m_n$ when the solution of the constrained fixed point problem \eqref{eq:fixedpoint} cannot be found avoids the clustering of training points in certain areas of the state space.

The phenomenon described above that there is no solution to \eqref{eq:fixedpoint} that satisfies the bounds on the inventory level is not unusual in many practical examples and, in particular, does not result from approximation errors or any other features of the numerical scheme. In an example in the next section, we consider trading a commodity, with price $(X_t)$, subject to constraints on the size of storage. An optimal behaviour when the price is high is to sell. However, if this happens, an optimal trading would never lead to a full inventory at the next time period, $Y^m_{n+1} = I_{\max}$. In fact, with high commodity prices over a period of time and an optimal strategy of selling, the backward constructed trajectory $Y^m_j$ would climb up (when time goes backwards) finally hitting the level $I_{\max}$. 

\begin{xmpl}
It is commonly seen in the inventory problems that the control represents additive adjustments to the inventory level, i.e. $\varphi(n, u, i) = i + u$. Equation \eqref{eq:fixedpoint} simplifies to
\[
Y^m_{n+1} = y + u^*_n(Z^m_n, y).
\]
Denoting $\Delta y = Y^m_{n+1} - y$ we obtain a classical fixed point problem
\[
\Delta y = \hat \nu (\Delta y),
\]
where $\hat \nu(\Delta y) = u^*_n(Z^m_n, Y^m_{n+1} - \Delta y)$. Lemma \ref{lem:1} implies that exists an unique solution $\Delta y\in[\underline{u},\overline{u}]$ for every $Y_{n+1}^m\in[\overline{u},I_{\max}+\underline{u}]$.
\end{xmpl}

In the following section we evaluate our methods on two examples and compare them against their main competitors: grid discretisation and control randomisation.

\section{Numerical evaluation}
\label{numerics}

This section collects numerical results showcasing the performance of our regress-later approach.  First example is a ``price arbitrage'' problem in the context of energy trading for which we compare the accuracy and running time of grid discretisation,  control randomisation and the regress-later Monte Carlo approach in both value and performance iteration. 

The second control problem  features a two dimensional inventory in the context of managing a system of hydro reservoirs. The aim is to maximise the profit from selling the electricity produced by a set of turbines installed in the pipes connecting the reservoirs. The mathematical formulation is similar to the previous problem, however the higher dimension of the inventory highlights the impact of dimension on the computational complexity of  grid discretisation.

Our third control problem is multi-dimensional and consists in controlling the operations of a power system comprised of a wind turbine, a group of residential buildings, an energy storage and an interconnection with the main grid with the aim of maximising the profit over a finite horizon. Due to the dimensionality the underlying process and a multi-dimensional control, we apply regress-later algorithm only. The multi-dimensionality of the control makes control randomisation significantly slower than RLMC. 


\subsection{Price arbitrage problem}
Consider the following specification of the dynamics of a commodity price as an Ornstein-Uhlenbeck process and inventory in continuous time (with time counted in years):
\begin{equation}
\label{eq:inventorybenchmark}
\begin{cases}
dX_t=2(5-X_t)\,dt+5\,dW_t, &\\
dI_t=u_t\,dt\,,\,\,\,u\in\mathcal{U}, &
\end{cases}
\end{equation}
where $\mathcal{U}=\{u_s\in \{-11.5,0,11.5\}: \ 0\le I_t \le 1 \,\,\forall t\}$ consists of adapted processes. The objective is to maximise the functional
\begin{equation}
\mathbb{E}\Bigl[\int_{t}^T -(u_s+\rho_{u_s})\,X_s\,ds+\frac{1}{2} X_N\,I_N |X_t=x,I_t=i\Bigr],
\end{equation}
where $\rho_{u_s}$ introduces a cost associated with each control:  $\rho_{u_s} =1_{\{u_s \ne 0\}}2$. 

We use an Euler discretisation of the dynamics and the performance measure for $T = 1$ and a time step $\Delta = 1/200$ yielding $N=200$ time periods. The resulting dynamic programming equation is of the form
\[
V(t,x,i)=\max_{\substack{u \in \{11.5, 0, -11.5\},\\ i + u\Delta \in [0, 1]}} \Bigl\{ -(u+\rho_{u})\,X_n \Delta +\mathbb{E}\bigl[V(n+1,X_{n+1}, i + u)|X_n=x\bigr]\Bigr\}.
\]

\subsubsection{Implementation}

All algorithms are implemented in MATLAB and are optimised in a similar fashion in order to reveal the relative differences in their computational complexity. In particular, we do not take advantage of vectorisation whenever this speed-up cannot be shared by all algorithms. The basis in regress later algorithms consisted of polynomials up to order two in $X$ and  $I$, i.e.~$\{  1,\, x,\, i,\, xi,\, x^2 ,\,i^2,\,x^2i,\linebreak\,x^2i^2\}$. Basis functions for control randomisation involve also the control variable, so we employed a basis comprising polynomials up to order two in $X$, $I$ and $u$, i.e.~$\{1,\,x,\,i,\,u,\, xi,\, x^2,\,i^2,\, u^2,\,xu,\,iu \}$. For the grid discretisation method we discretised the inventory in $L$ levels and then used polynomial basis functions up to order two for $X$, i.e. $\{  1,\, x,\, x^2 \}$. 

\subsubsection{Comparison}

We run our experiments with various numbers of simulated paths and grid points (for grid discretisation) or simulated paths (for control randomisation) or training points (for regress-later) and evaluate the resulting policies over a the same set of simulations shared between all methods and runs in order to minimise the effect of the Monte Carlo error on relative results. We record running time and value of estimated policies and display them in Figure \ref{fig:performance_time}.
The results for  the value iteration specification are shown on the left panel. Notice that the regress later is more efficient than all other methods; it can achieve similar levels of accuracy to the other algorithms  in less time.  Grid discretisation on the other hand is the slowest method, but it can nonetheless achieve high levels of accuracy. Control randomisation performs slower to regress later as it suffers from the need of fitting an higher number of basis functions. 
 
 In the performance iteration specification, see the right panel of Figure \ref{fig:performance_time},  we only display two versions of regress-later algorithm: with and without backward construction of paths. Control randomisation and grid discretisation are less efficient as in the value iteration case with the latter requiring  disproportionally  long computational time due to multiple resimulations for each discretisation point.

Regress-later algorithm without resimulation of paths is slower but more accurate that the one with backward construction of paths. This improved accuracy may be explained by the resimulation present in the former while the latter uses the same training points for both estimation of the optimal policy and its evaluation introducing bias; this drawback has already been observed in literature in other contexts. The use of backward construction of the paths reduces, for this particular problem, the number of resimulated paths to about $25\%$ of the original value as approximately one quarter of paths are broken at each time step. The lower precision of the regress later with backward construction of paths combined with its lower computational complexity makes it useful only when the time constrain is particularly tight. We observe that for problems with continuous space of controls the improvement offered by this backward simulation method is considerable as the optimization problem to be solved at each time step of the resimulation procedure is more time consuming than in the present problem where controls can take one of three values. 
  
 The difference in performance of estimated policies between the value and performance iteration observed in this example might be explained as follows. Firstly, the performance iteration problem has been slightly modified considering shorter timesteps $\Delta=1/1000$ rather than $\Delta=1/200$ in order to make the resimulation step more burdensome and highlight the differences between the algorithms with and without backward construction of paths. Secondly, the functional form of terminal condition and running reward make this optimisation problem linear quadratic over most of the domain (non-linearities are encountered at the boundaries of the inventory) indicating that basis function can replicate well the true value function, limiting the propagation of the error. In these cases the use of performance iteration is rarely advised as it implies higher variance of the points used to fit the regression parameters, reducing the estimation precision.

  \begin{figure}
\centering
\includegraphics[width=\linewidth]{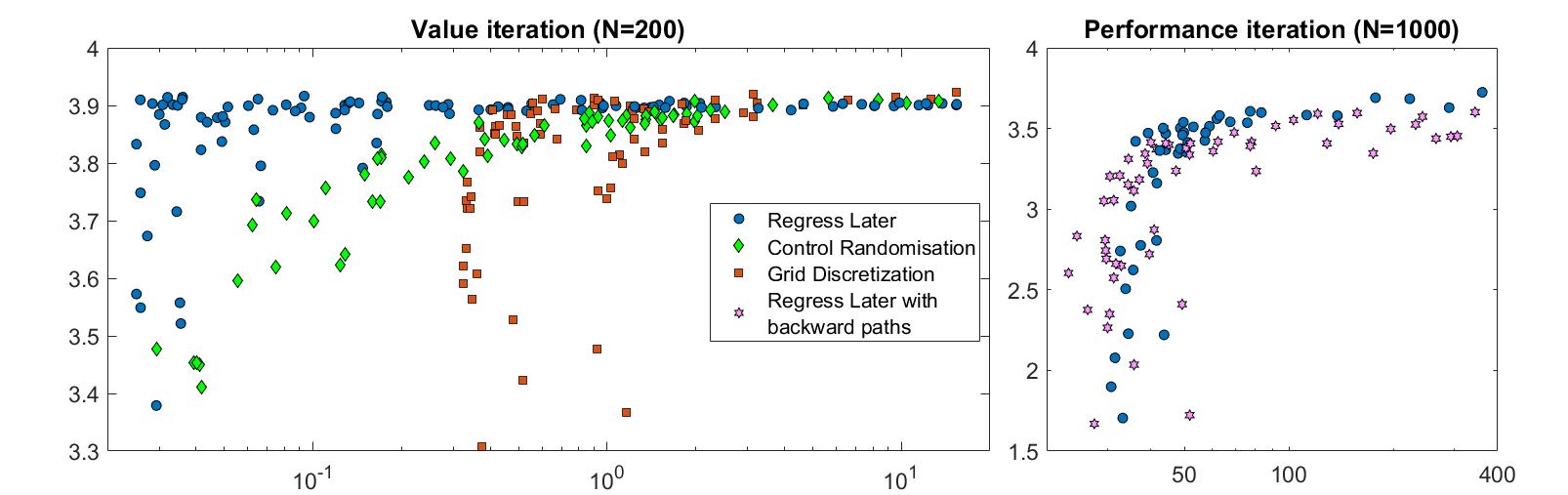}
\caption{Trade-off time versus performance of estimated policies for four algorithms studied in this paper. The horizontal axis shows time. The vertical axis displays the performance of estimated policies computed using a fixed set of 1000 paths to aid comparison. Value iteration algorithms with time discretisation $\Delta = 1/200$ are shown on the  left panel. The right panel displays results for performance iteration and $\Delta = 1/1000$ to amplify the effect of resimulation.}
\label{fig:performance_time}
\end{figure}

\subsection{System of pumped hydro-reservoirs}\label{subsec:2D}

In this section we extend the previous problem to a case in which the inventory is two dimensional. We consider a problem of  managing a system of two interconnected hydro reservoirs with turbines and pumps, see Figure \ref{fig:reser}. Similarly to the previous case, we model the electricity price as AR(1) process:
 \[
 X_{n+1}=X_{n}+\alpha(\mu-X_n)+\sigma\xi_n, \qquad \xi_n\sim\mathcal{N}(0,1),\,i.i.d.,
 \]
where one time step represents 30 seconds. We aim to profit from buying and selling electricity. We use the parameters $\sigma=1$, $\alpha=0.1$ and $\mu=40$.
 \begin{figure}[tb]
\centering
\includegraphics[width=0.5\linewidth]{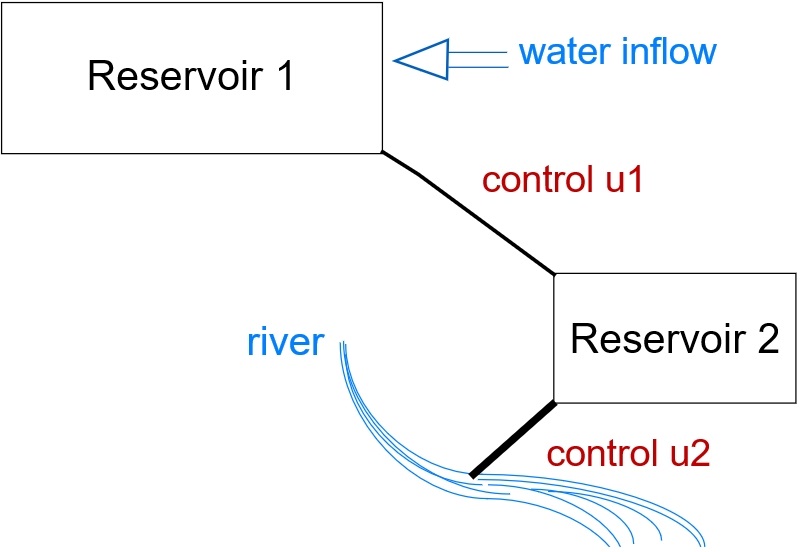}
\caption{Interconnected system of reservoirs.} \label{fig:reser}
\end{figure}

Reservoir 1 is fed with a constant water inflow equivalent to 120MW (this assumption is reasonable given the 3 hours horizon). The  connections between Reservoir 1 and Reservoir 2 is equipped with a pump and a turbine with production/consumption capacity of 600 MW (for simplicity of presentation but without any loss of numerical difficulty we assume 100\% round-trip efficiency). The connection of Reservoir 2 with the river can accommodate a higher flow of water and contains a set of turbines with maximum generation capacity  1.2 GW. The size of Reservoir 1 is 2 GWh, the size of Reservoir 2 is 1 GWh. The numbers chosen for this example have been inspired by the facilities installed in Dinorwig, UK, and in multiple locations in Norway.

The dynamics of the energy stored in the reservoirs follow::
\[
I_{n+1}^1=I_{n}^1+u^{1}_n+0.12,
\]
\[
I_{n+1}^2=I_{n}^2-u^{1}_n+u^{2}_n,
\]
where $u^1_n \in [-0.6, 0.6]$ is the flow between Reservoir 1 and 2, and $u^2_n \in [0, 1.2]$ is the flow from Reservoir 2 to the river. We define  the performance measure over three hours time horizon as follow:
\[
J(u^{12},u^{2r})=\sum_{s=1}^N -X_s(u^{12}_n+u^{2r}_n)-A(I^{1}_N+I^{2}_N-1.5)^2
\]
where the terminal condition encourages that the total energy stored in both reservoirs at terminal time equals 50\% of the maximum of 3 GWh. 
An example of the optimal strategy estimated by Regress Later is displayed in Figure \ref{fig:reservoirs}
\begin{figure}[tb]
\centering
\includegraphics[width=\linewidth]{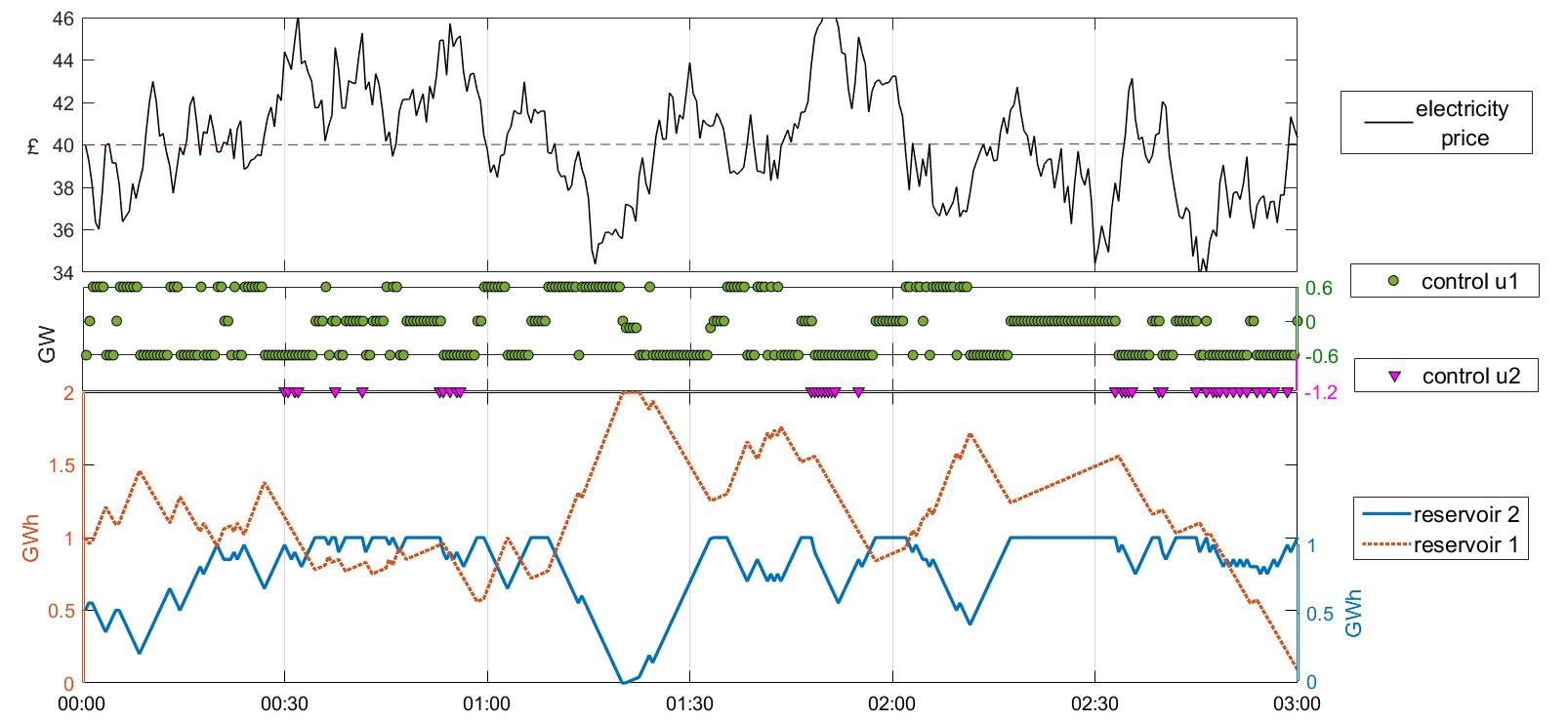}
\caption{At the top the electricity price. In the middle the control, $u^{12}$ in green and $u^{2r}$ in violet. At the bottom in blue inventory 2 (1 GWh capacity) and in orange inventory 1 (2 GWh capacity)} 
\label{fig:reservoirs}
\end{figure}

We extended our implementation of grid discretisation to this problem to show the effect of the dimension of inventory on its performance. Consider for example that if we take 7 discretisation points for each storage, we obtain a total of 49 possible configurations, for which the estimation of regression coefficients and computation of the optimal control has to be repeated. This leads to significantly lower computational efficiency compared to regress-later algorithm, see Table \ref{tab:} which lists running time and values implied by the estimated policies for regress-later and grid discretisation.

\begin{table}[tb]
\centering
\caption{Performance of estimated policies evaluated on a fixed set of 1000 paths. For comparison, the myopic policy's performance is $274$.}
\label{tab:}
\begin{tabular}{l|ccc|ccc}
 &  \multicolumn{3}{c|}{Regress later} & \multicolumn{3}{c}{Grid discretisation } \\ \hline \hline
 value & 340 & 355 & 357 & 290 & 328 & 357\\
 time & 0.2 s & 0.5 s & 6 s & 4 s & 7.5 s & 35 s \\
 \end{tabular}
\end{table}

\subsection{Energy Storage management}\label{subsec:energy_stor}

In this subsection we present a high dimensional problem in which the control is initially five-dimensional and the state space four-dimensional; we will reduce the action space to two dimensions and solve the problem applying $RLMC$ algorithm.

For the model in this example we take inspiration from \citet{Powell2014} and consider an electricity system composed of an intermittent energy supply represented by a wind turbine, a load or electricity demand represented by a group of residential buildings, a battery storage device and an interconnection with the main grid. Energy can flow through the different components of the system as indicated by the arrows in Figure \ref{fig:powell}; note that the power produced from the wind turbine is directly transferred to the load and used to satisfy the demand, the power in excess is transferred to the storage device which can sell it to the main grid or save it for later use in the battery. When the power from the wind turbine is insufficient to satisfy the demand, the shortfall is fulfilled either (or both) from the storage device discharging the battery or/and by buying electricity from the grid. Note that we assume that the main grid can always provide the power we require.

The operator has control on every flow of energy, making the dimensionality of the control process equal to five; we denote these flows at time $n$  by $X^{WD}_n$,$X^{WB}_n$,$X^{BD}_n$,$X^{GB}_n$ and $X^{GD}_n$, where the notation $X^{i,j}$ indicates energy flowing from $i$ to $j$ and we denote the components as follow: $W$ for the wind turbine, $D$ for the demand, $G$ for the grid interconnection and $B$ for the battery. We constrain each flow to be positive except for $X^{GB}_n$ which can assume negative values indicating that the battery is selling energy to the grid. Finally we consider our problem stated on an hourly basis over a time horizon of two weeks. 

As in \citet{Powell2014} we make the following modelling choices.

\subsubsection*{Wind Power}
We compute the power in MWh produced by a wind turbine with rated power $2.1\,MW$ in one hour as a function of the wind speed $w_n$ (average over an hour in m/s):
\[
W_n=10^{-6}\,C_p\,\rho\, A \,\max(w_n,\omega)^3,
\] 
where $C_p=0.4$ is the power efficiency, $\rho=1.225\,kg/m^3$ is the density of air, $\omega=14 m/s$ represents the rated output speed and  $A=50^2\pi$ is the area swept by the blades.

In particular we model the centred square root of the wind speed (which we consider constant) as an AR(1) process:
\[
\begin{split}
w_n=&\left( y_n+\mu\right)^{2}\\
y_{n+1}=&0.7633 y_{n}+0.4020\xi_n,\quad\xi_n\sim\mathcal{N}(0,1)\\
\end{split}
\]
The coefficients are calibrated from 2010 real data registered in Maryneal, Texas; see \citet{Powell2014} for details. 
\subsubsection*{Electricity Price}
The electricity price is modelled as is \citet{Powell2014} as an exponential of a mean reverting process superimposed on a weekly seasonal component $Y^{week}$ estimated from the data recorded in Texas in 2010:
\[
\begin{split}
P_{n+1}=&e^{Y_{n+1}}-27.2531+Y^{week}_{n+1}\\
Y_{n+1}=&Y_n+0.2055(4.1995-Y_{n})+0.11856 \xi_n+J_n,\quad \xi_n\sim\mathcal{N}(0,1)
\end{split}
\]
where $J_n=\xi_n^j1_{\{U_n<0.017\}}$ is the jump component, $\xi_n^j\sim\mathcal{N}(0,0.4229)$ and $U_n\sim\mathcal{U}(0,1)$.

Note that we introduce a fixed $10\%$ transmission surcharge on the use of the grid for both supplying demand and exchanging energy with the battery. This is a requirement in order to make the optimal control unique, otherwise it is equivalent to supply power to the load from the battery through $X^{BD}$ or through $X^{GB}\to X^{GD}$.  
\subsubsection*{Electricity Demand}  
It is well known that temperature is one of the main drivers of the electricity demand of residential buildings, in fact during periods of high or low temperature households tend to use air conditioning.
The electricity demand is therefore modelled as a function of temperature; in \citet{Powell2014} it is suggested to use an order 5 polynomial to fit the observed pairs of demand and temperature. Since estimated coefficients were not provided and in order to be consistent with the other data, we estimated our coefficients using observations recorded in Austin, Texas in 2010 (\url{http://www.noaa.gov/}). We obtained the following model for the demand:
\[
\tilde{D}_{n}=6784.9728-235.5911\,T_{n}-2.2869\,T^2_{n}+0.8897\,T^3_{n}-0.0204\,T^4_{n}+0.000105\,T^5_{n}.
\]
To scale down the demand to a few buildings we set $D_{n}=\frac{\tilde{D}_{n}}{30000}$. 
We model the temperature as a simple mean reverting process $T_n=\tilde{T}_n+\mu_n$, where $\mu_n$ represents a daily seasonal component estimated from the data and
\[
\tilde{T}_{n+1}=-0.92\tilde{T}_n+2.14\xi_n, \quad \xi_n\sim\mathcal{N}(0,1).
\]
\subsubsection*{The Battery}
The most interesting component of the system is an energy storage device; it is represented by a battery with rated capacity $I_{\max}=20$MWh and maximum power in charge $\Pi_{\max}=2.1$MW and discharge $\Pi_{\min}=-2.1$MW. For simplicity of exposition we assume the battery is $100\%$ efficient and its use does not take into account its ageing. The level of charge of the battery evolves as follow:
\[
I_{n+1}=I_n+X^{GB}_n+X^{WB}_n-X^{BD}_n.
\]

\begin{figure}
\centering
\includegraphics[width=1\linewidth]{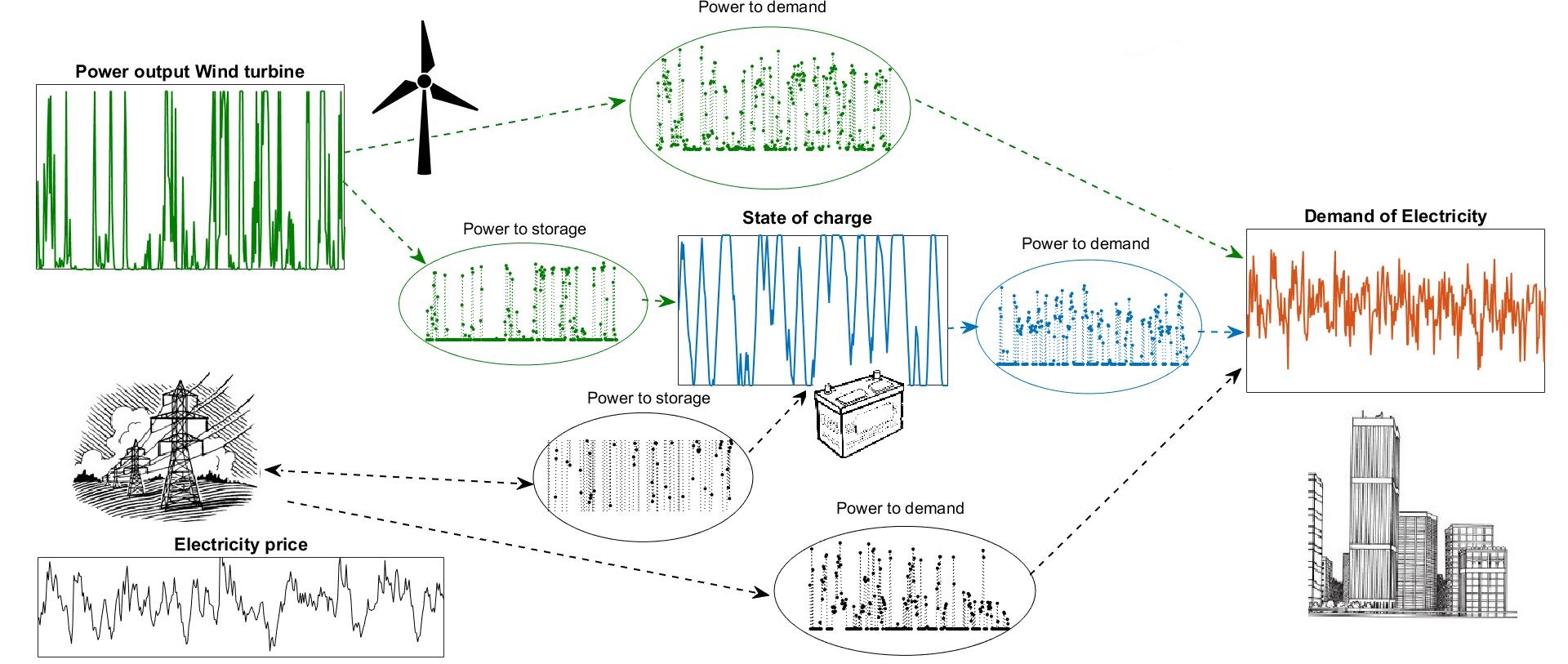}
\caption{The diagram above shows components of the system. Boxes show evolution of state variables. Ellipses display energy flows among the components. Note that the algorithm directly controls the connection grid-battery and grid-demand as other flows are uniquely determined by those two. }
\label{fig:powell}
\end{figure}

\subsubsection{Problem formulation and solution}
We look at the system on an hourly basis during the first two weeks of January, which corresponds to $336$ hourly time steps. We aim to provide power to the residential buildings minimising the cost of buying electricity from the grid.  Since the problem is stated in discrete time we will deal with energy rather than power. Notice that the following set of constraints is in place (we dropped the subscript $n$):
\begin{align}
& X^{GD}+X^{WD}+X^{BD}=D \label{demand_cons}\tag{c.1} \\
 &X^{WD}=\min\{W,D\} \label{windem_cons}\tag{c.2} \\
& X^{WB}+X^{WD}=W \label{wind_cons}\tag{c.3} \\
&X^{GD}\ge 0 \label{griddem_cons} \tag{c.5}\\
&X^{BD}\in[0,-\Pi_{\min}] \label{battdem_cons}\tag{c.6} \\
&X^{GB}+X^{WB}-X^{BD}\in[\Pi_{\min},\Pi_{\max}]\cap[-I,I_{\max}-I] \quad \quad \label{inventory_cons}\tag{c.7}
\end{align}

The first constraint \eqref{demand_cons} is natural and says that the demand has to be matched at all times.
Constraints \eqref{windem_cons} and \eqref{wind_cons} regulate the operations of the wind turbine: all its power is delivered to the load first and the excess is transferred to the battery. As long as the current level of charge allows it the power is accumulated; the excess power is sold to the grid (for a price reduced by the $10\%$ transmission surcharge), c.f. \eqref{inventory_cons}.
Constraint \eqref{griddem_cons} enforces the natural condition that power should only flow from the grid to the load. Constraint \eqref{battdem_cons} limits the discharging rate of the battery enforcing a bound on the energy flowing from the battery to the demand. Finally the last constraint \eqref{inventory_cons} guarantees that the inventory stays between empty and full and that the maximum charging/discharging rates are maintained.

Note that the constraints \eqref{demand_cons}-\eqref{inventory_cons} actually reduce the dimension of the control process from five to two ($X^{GB}$ and $X^{GD}$), as the other components can be computed consequently. Recall that $X^{GD} \ge 0$ while the sign of $X^{GB}$ depends on whether the battery buys or sells energy to the grid. 

We can then write the profit of an electricity provider who owns the wind turbine and the battery and has contracted a given load, as electricity sold minus electricity bought (net of transmission surcharge):
\[
J\left(\left(D_s,W_s,P_s,B_s,I_s,X^{GB}_s,X^{GD}_s\right)_{s=1,\ldots, N}\right)=\sum_{n=1}^N P_n\,D_n-1.1P_n(X^{GB}_n+X^{GD}_n).
\]
Define the value function of the problem as the solution to the minimisation problem:
\begin{equation}
\label{eq:powell}
\begin{split}
V(n,&d,w,p,i)=\\ &\min_{X\in\mathcal{U}}\bigg\{-\mathbb{E}\Big[ J\big((D_s,W_s,P_s,B_s,I_s,X^{GB}_s,X^{GD}_s)_{s=n,\ldots, N}\big)|(D_n,\,W_n,\,P_n,\,I_n)=(d,w,p,i)\Big] \bigg\},
\end{split}
\end{equation}
where $\mathcal{U}$ is the set of admissible controls specified by constraints  \eqref{demand_cons}-\eqref{inventory_cons}.

\subsubsection{Numerical solution}
We solve the problem \eqref{eq:powell} applying the Regress Later algorithm introduced in Section \ref{later}. For this problem the choice of linear and quadratic basis functions, i.e. $\{D,\,W,\,P,\, D^2,\,W^2,\,P^2,\, I,\,D\,I,
\linebreak\, P\,I,\,W\,I,\,I^2\}$, works satisfactorily. We could also not see any significant improvement in results when using more than $5000$ simulated trajectories. The numerical complexity of the algorithm stems mainly from a non-trivial two dimensional constrained optimisation problem for the control which is solved at each time step and for each training point.

\begin{figure}[tbp]
\centering
\includegraphics[width=0.9\linewidth]{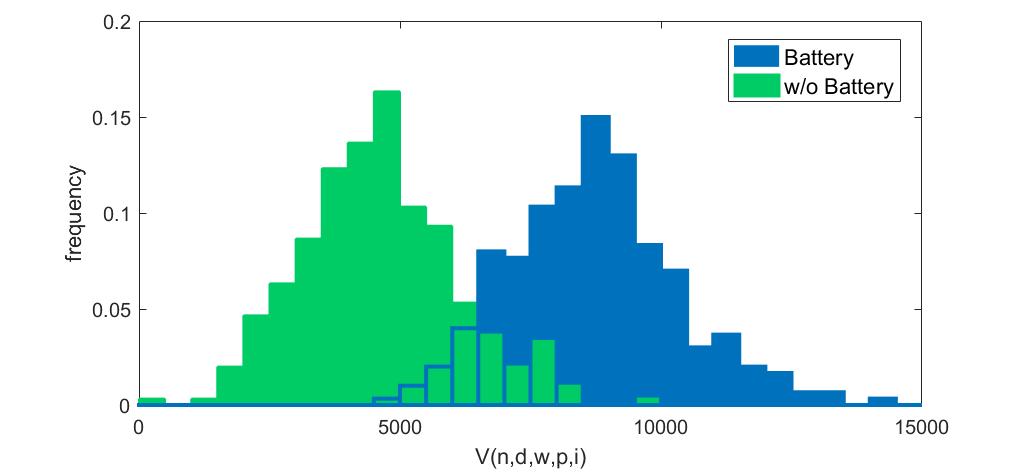}
\caption{The empirical probability distributions of the performance $J$ at time $0$ with the initial data $(d,w,p,i)=(0.15,0,35,10)$, representing the profits obtained in the system with (blue) and without (green) the use of the battery. There are 30 bins each 500 units wide.}
\label{fig:dwp_hist}
\end{figure}
\begin{figure}[tbp]
\centering
\includegraphics[width=0.9\linewidth]{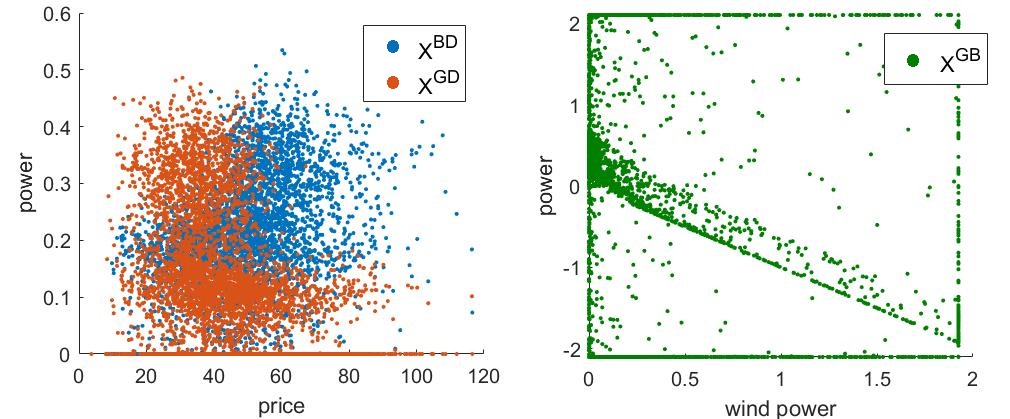}
\caption{This graphs display interactions within the system in the form of scatter plots. The left plot shows $X^{GD}$ and $X^{BD}$ against the price revealing that the first tends to provide power when price is low, the latter when price is high. The right plots show how the connection battery-grid ($X^{GB}$) is influenced by the wind power; note that the high level of wind often results in selling energy to the grid in order to decrease the level of charge.}
\label{fig:scatter_dwp}
\end{figure}

Once the estimated optimal policy has been computed we run it forward in time to assess its value; in Figure \ref{fig:powell} we display a particular realisation of all the flows of energy among the components. It can be observed that the battery stores the wind power when this exceeds the demand of electricity and transfers it to the demand instead of buying electricity from the grid at high price. Over the two weeks of interest the system considered has an average demand of electricity of $320$ KW compared with an average production of wind power of $350$ KW meaning that the system is a net exporter of electricity. Figure \ref{fig:scatter_dwp} displays some of the interactions within the system. The scatter plot on the left shows the energy flowing to the demand from the battery (blue) and from the grid (orange). There is a general tendency for the power to be bought from the grid when its price is low while it is supplied mainly by the battery when the price is high. The scatter plot on the right displays the interactions between wind power and transfers through the connection battery-grid. We observe that when the wind power is high the battery tends to sell energy to the grid in order to maintain an acceptable level of inventory.

The solution of the optimisation problem of this subsection enables study of  the economic value of using a battery to assist the operations of a wind turbine. To this end, we evaluate the estimated policy on a new and larger set of trajectories of the underlying processes (wind, demand and price). This is further compared to the profits made in a system with no battery installed, i.e., all excess wind energy immediately sold to the grid. We find that the $25\%$ and $75\%$ quantiles of the profits for operating the system without the use of the storage over two weeks are $3595$, and $5498$, respectively. When the flexibility provided by the storage is optimally used the respective quantiles are $7556$ and $9598$, showing an increase in the profits of around $4000$. Further details about the distribution of profits are displayed in Figure \ref{fig:dwp_hist}. Considering the inventory behaviour displayed in Figure \ref{fig:powell} as ``typical'' we can compute the total throughput of energy of $490$ MWh in the two weeks. Assuming further that the battery sustains $4000$ full cycles we obtain an estimated life-span of $320$ weeks or approximately $6$ years. Assuming further a stationary flow of profit during the life of the battery we conclude that the installation of the storage alongside the wind turbine will bring additionally $\$ 640000$ in profits (assuming zero scrap value and zero interest rate).

Our solution of problem \eqref{eq:powell} shows that RLMC is well suited for dealing with the optimal control of complex systems and therefore provides an alternative to  heuristics and policies obtained under perfect foresight used commonly for the solution of real world high dimensional problems.

\section{Conclusions}

We presented Monte Carlo methods for the optimal control of an inventory subject to a measure of performance based on an exogenous random process. Our main contribution was in designing numerical optimisation algorithms based on regress later approach and using value and performance iteration schemes. In particular, we showed how, solving a fix point problem, one can avoid resimulation of the inventory trajectories in performance iteration during the backward recursion steps. Numerical tests demonstrated that our regress-later Monte Carlo methods perform well compared to competitors and are suitable for solving real-world high-dimensional problems. 


\bibliographystyle{apalike}
\bibliography{reference}

%
%
%
%
%

\end{document}